\documentclass{cmslatex}
\usepackage[utf8]{inputenc}
\usepackage[paperwidth=7in, paperheight=10in, margin=.875in]{geometry}
\usepackage[backref,colorlinks,linkcolor=red,anchorcolor=green,citecolor=blue]{hyperref}
\usepackage{booktabs,multirow}
\usepackage[noend]{algpseudocode}
\usepackage[pagewise]{lineno}
\usepackage{algorithmicx,algorithm,setspace}
\usepackage{amsfonts,amssymb}
\usepackage{amsmath}
\usepackage{graphicx,subfigure}
\usepackage{cite}
\usepackage{float}
\usepackage{todonotes}
\usepackage{enumerate}
\usepackage{ulem,marvosym}
\usepackage{xcolor}
\usepackage{array}
\usepackage{colortbl}
\usepackage{nicematrix}
\usepackage{tikz}
\usepackage{enumitem}
\usepackage{mathtools}
\usepackage{soul} 
\usepackage{enumitem}

\soulregister\eqref7
\soulregister\cite7
\definecolor{green}{rgb}{0.0,0.55,0.13}

\renewcommand{\theequation}{\arabic{section}.\arabic{equation}}

\newcommand{\abs}[1]{\left|#1\right|}
\newcommand{\brac}[1]{\left(#1\right)}
\newcommand{\norm}[1]{\left\Vert#1\right\Vert}

\newcommand{\ba}{\boldsymbol{a}}
\newcommand{\bb}{\boldsymbol{b}}
\newcommand{\bx}{\boldsymbol{x}}
\newcommand{\by}{\boldsymbol{y}}

\newcommand{\bp}{\boldsymbol{p}}
\newcommand{\bq}{\boldsymbol{q}}
\newcommand{\br}{\boldsymbol{r}}
\newcommand{\bs}{\boldsymbol{s}}

\newcommand{\mbr}{\mathbb{R}}
\newcommand{\mbn}{\mathbb{N}}
\newcommand{\bu}{\boldsymbol{u}}
\newcommand{\bv}{\boldsymbol{v}}

\newcommand{\bphi}{\boldsymbol{\phi}}
\newcommand{\bpsi}{\boldsymbol{\psi}}
\newcommand{\bhe}{\boldsymbol{\eta}}
\newcommand{\bxi}{\boldsymbol{\xi}}
\newcommand{\veps}{\varepsilon}

\allowdisplaybreaks
\begin{document}
\begin{sloppypar}
\title{FINOM: Fast Sinkhorn on Non-uniform Meshes}
\author{Qihao Cheng\footnote{Department of Mathematical Sciences, Tsinghua University, Beijing 100084, China 
(cqh22@mails.tsinghua.edu.cn).}, Qichen Liao\footnote{Department of Mathematical Sciences, Tsinghua University, Beijing 100084, China 
(lqc20@tsinghua.org.cn).}, Hao Wu\footnote{Corresponding author. Department of Mathematical Sciences, Tsinghua University, Beijing 100084, China 
(hwu@tsinghua.edu.cn).}, and Shuai Yang\footnote{Department of Mathematical Sciences, Tsinghua University, Beijing 100084, China 
(s-yang21@mails.tsinghua.edu.cn).}}
\maketitle
\begin{abstract}
   A linear-complexity algorithm for computing the Wasserstein-1 distance on non-uniform meshes is proposed.   This work extends the fast Sinkhorn algorithms from [Q. Liao et al., Commun. Math. Sci., 20(2022)] and [Q. Liao et al., J. Sci. Comput., 98 (2024)] to non-uniform meshes. In those prior works, a distinctive collinear structure of the kernel matrix on uniform meshes was identified, enabling \(O(N)\) acceleration via dynamic programming. While non-uniform meshes are prevalent in practical applications like computational fluid dynamics and finance, their lack of collinearity has hindered direct acceleration. In this paper, we introduce the concept of a ``dividing index'', which partitions the kernel matrix into two blocks. We demonstrate that each block exhibits a quasi-collinear property, a generalization of the structure found in uniform meshes. Leveraging this insight, we develop \textbf{F}ast S\textbf{I}nkhorn algorithm on \textbf{NO}n-uniform \textbf{M}eshes (\textbf{FINOM}), a dynamic programming approach that reduces the per-iteration complexity of the Sinkhorn algorithm from \(O(N^2)\) to \(O(N)\). Extensive numerical experiments on 1D and 2D problems confirm these improvements, achieving speed-ups of several orders of magnitude while maintaining accuracy.
\end{abstract}
\begin{keywords}  
	Optimal Transport, Wasserstein-1 metric, Sinkhorn algorithm, non-uniform mesh, fast algorithm
\end{keywords}
\begin{AMS} 
	49M25; 65K10
\end{AMS}
\section{Introduction}
\label{sec:intro}
The theory of Optimal Transport (OT), which seeks the most cost-effective way to transform one probability distribution into another, provides a powerful framework for comparing measures via the Wasserstein metric~\cite{peyre2019computational,villani2009optimal}. As the Wasserstein metric provides a global comparison between probability distributions, it offers significant advantages over traditional metrics and has been widely applied in numerous real-world applications, such as image processing~\cite{zhan2021unbalanced,zhu2007image,museyko2009application}, machine learning~\cite{arjovsky2017wasserstein,kolouri2017optimal}, inverse problems~\cite{chen2018quadratic,engquist2016optimal,engquist2022optimal,zhou2023wasserstein}, communication engineering~\cite{wu2023communication,ye2022optimal}.

In this work, we focus on computing the Wasserstein-1 distance between two discrete measures $\mu=\sum_{i=1}^{N}u_i\delta_{\bx_i}$ and $\nu=\sum_{j=1}^{M}v_j\delta_{\by_j}$\footnote{The specific details regarding the positions $x_i$ and $y_j$ will be provided in subsequent sections.} on potentially non-uniform meshes, defined as the solution to the linear program:
\begin{equation}\label{eq:w1Definition}
    \begin{aligned}
        &\min_{\Gamma\in\Pi(\bu,\bv)}\left\langle C,\;\Gamma\right\rangle, \\
        &\Pi(\bu,\bv) \triangleq\left\{\Gamma\in\mbr^{N\times M}_+\;\big\vert \; \Gamma\mathbf{1}_M=\bu,\;\Gamma^T\mathbf{1}_N=\bv\right\}. \\
    \end{aligned}
\end{equation}
Here, $\Gamma=\left[\gamma_{ij}\right]_{N\times M}$ is the transport plan and
$C=\left[c_{ij}\right]=[\left\Vert\bx_i-\by_j\right\Vert_1]\in\mbr^{N\times M}_+$ is the cost matrix.\footnote{Our method and analysis apply to the general case of $N\neq M$. The assumption $N=M$ is made hereafter for notational simplicity and without loss of generality. } 

Recently, a variety of numerical algorithms have been developed to tackle the optimal transport problem from diverse perspectives, including linear programming methods~\cite{oberman2015efficient,yang2021fast}, primal-dual algorithms~\cite{hu2022efficient}, methods based on solving the Monge-Amp\`{e}re equation~\cite{de2014monge,benamou2000computational,huesmann2019benamou}, proximal splitting~\cite{metivier2016optimal}, and proximal block coordinate descent methods~\cite{hu2023global}. 

A widely adopted approach with provable convergence is the Sinkhorn algorithm~\cite{sinkhorn1967diagonal,altschuler2017near,lin2019efficient}, which employs entropy regularization to approximate solutions to optimal transport and its variants~\cite{cuturi2013sinkhorn,wu2022double,scetbon2021low,le2021robust}.
Each iteration of the Sinkhorn algorithm alternately updates two scaling vectors via a matrix-vector multiplication and an element-wise division, yielding a per-iteration computational complexity of $O(N^2)$. The Sinkhorn iteration procedure can be accelerated by leveraging massively parallel computing architectures~\cite{tithi2020efficient,feydy2019fast}. Separately, a number of approximate methods have been proposed to reduce this computational burden, including heat kernel approximation and convolution techniques~\cite{solomon2015convolutional}, low-rank approximations~\cite{altschuler2019massively,gasteiger2021scalable,scetbon2021low}, and importance sparsification~\cite{li2023importance}.

For computing the Wasserstein-1 metric on uniform meshes, recently, a unified fast Sinkhorn approach, introduced in the complementary works~\cite{liao2022fast,liao2024fast}, leverages the special structure of the kernel matrix to accelerate each iteration without sacrificing accuracy. In each iteration, dynamic programming significantly reduces the complexity of the matrix-vector multiplication from $O\brac{N^{2}}$ to $O\brac{N}$. 

In practical applications, non-uniform meshes are frequently employed. They are particularly suitable for problems involving complex spatial geometries or concentrated probability densities, with prominent applications in computational fluid dynamics~\cite{devloo1988hp, mavriplis1994adaptive}, mathematical finance~\cite{wang2019variable}, among others. For OT in Euclidean space, when either the source or target distribution is relatively concentrated, using a non-uniform mesh can significantly reduce the required number of grid points compared to a uniform mesh, thereby lowering computational costs. However, it is challenging to reduce the computational cost of the Sinkhorn algorithm on non-uniform meshes to $O\brac{N}$ because the collinear property of the kernel matrix is no longer applicable, preventing the direct application of the Fast Sinkhorn method from~\cite{liao2022fast,liao2024fast}. Moreover, the \(O(N^{2})\) complexity of the standard Sinkhorn algorithm would offset the computational savings achieved through non-uniform meshing.

In this paper, we propose \textbf{F}ast s\textbf{I}nkhorn algorithm on \textbf{NO}n-uniform \textbf{M}eshes (\textbf{FINOM}). To address the loss of the collinear property in this general setting, we introduce a key innovation: a matrix partitioning strategy based on a novel ``dividing index''. This index decomposes the kernel matrix into two sub-blocks, each of which is shown to exhibit a quasi-collinear property. By leveraging this structure, we develop an adapted dynamic programming scheme that performs the core matrix-vector multiplication with linear complexity. Numerical experiments demonstrate that our algorithm achieves dramatic speed-ups over the standard Sinkhorn method on non-uniform meshes, while preserving the exactness of the original matrix-vector product computations.  

The remainder of this manuscript is arranged as follows. Section~\ref{sec:1d} details our core contribution: the fast Sinkhorn algorithm for 1D non-uniform meshes. It introduces the dividing index, establishes the quasi-collinear structure of the partitioned kernel matrix, and presents the resulting linear-complexity dynamic programming for matrix-vector multiplication. The generalization of our method to higher-dimensional spaces is presented in Section~\ref{sec:2d}. Section~\ref{sec:experiments} is devoted to numerical experiments on 1D and 2D problems, validating the linear-complexity and superior performance of our proposed algorithm. We conclude the paper with a summary and outlook on future work in Section~\ref{sec:conclusion}.

\section{One-dimensional FINOM}
\label{sec:1d}
In this section, we discuss the fast algorithm for the Wasserstein-1 metric on 1D non-uniform meshes. Without loss of generality, we assume that the locations $\bx=\brac{x_1,x_2,\cdots,x_N}$ and $\by=\brac{y_1,y_2,\cdots,y_N}$ are sorted in ascending order, i.e., $x_1< x_2<\cdots< x_N$ and $y_1< y_2<\cdots< y_N$. 

\subsection{Sinkhorn algorithm and the kernel matrix}
The Sinkhorn algorithm\cite{cuturi2013sinkhorn, sinkhorn1967diagonal} addresses an entropy-regularized version of Problem~\eqref{eq:w1Definition}:
\begin{equation}\label{eq:regularized problem}
\min_{\Gamma\in\Pi(\bu,\bv)}\sum_{i,j}C_{ij}\gamma_{ij} + \varepsilon \sum_{i,j}\gamma_{ij}\brac{\log \gamma_{ij}-1}, 
\end{equation}
yielding an efficient and approximate optimal transport to the primal problem. The solution of Problem~\eqref{eq:regularized problem} is unique and has the form
\begin{equation*}
\gamma_{ij}=\phi_ik_{ij}\psi_j, \quad \text{where}\quad k_{ij}=e^{-\frac{c_{ij}}{\varepsilon}}, 
\end{equation*} 
or equivalently, $\Gamma=\operatorname{diag}\left(\bphi\right)K\operatorname{diag}\left(\bpsi\right)$. Here $K=\left[k_{ij}\right]= e^{-C/\varepsilon}\in \mbr^{N\times N}$
is the kernel matrix, and the scaling vectors $\bphi=\left[\phi_i\right] \in \mbr^N$ and $\bpsi=\left[\psi_j\right] \in \mbr^N$
can be obtained by the iterative method:
\begin{equation}
    \label{eq:iteration}
    \bpsi^{(l+1)}=\bv \oslash \brac{K^{\top}\bphi^{(l)}}, \quad \bphi^{(l+1)}=\bu \oslash \brac{K\bpsi^{(l+1)}}.
\end{equation}
Here $\oslash$ represents point-wise division and $l$ denotes the iterative counter. 

Matrix-vector multiplication (e.g., $K\bpsi$) typically requires $O(N^2)$ computations and dominates the complexity of a Sinkhorn iteration in~\eqref{eq:iteration}. In previous works~\cite{liao2022fast,liao2024fast}, by leveraging the special structure of the kernel matrix on a uniform mesh, the complexity of such matrix-vector multiplication has been effectively reduced to $O(N)$, thus accelerating the Sinkhorn algorithm. To extend this linear-complexity result to non-uniform meshes, we introduce a block partitioning framework, which is enabled by a key tool called the dividing index.

\subsection{Block partitioning via the dividing index}
In \cite{liao2022fast,liao2024fast}, due to the uniformity of the mesh, the lower and upper triangular parts of the kernel matrix exhibit a collinear structure, which are defined as the Lower-ColLinear Triangular Matrix and Upper-ColLinear Triangular Matrix in \cite[Section 3.1]{liao2024fast}. However, this structure does not hold for the kernel matrix under non-uniform mesh grids. Instead, an analogous structure is sought by partitioning the kernel matrix. This partitioning is formalized using the dividing index:
\begin{definition}[Dividing index]
Let $\bx=\brac{x_1,x_2,\cdots,x_N}$ and $\by=\brac{y_1,y_2,\cdots,y_N}$ be sorted meshes with $x_1<x_2<\dots<x_N$ and $y_1<y_2<\dots<y_N$. Define $\zeta:\{1,2,\cdots,N\}\to \{0,1,2,\cdots,N\}$ as a mapping, where for each $x_i$ (the coordinate corresponding to row $i$ of the kernel matrix $K$):
\begin{itemize}
    \item If $y_1\leq x_i < y_N$, then $\zeta(i)$ is the unique index satisfying $y_{\zeta(i)}\leq x_i < y_{\zeta(i)+1}$.
    \item If $x_i<y_1$, then $\zeta(i)=0$.
    \item If $x_i\geq y_N$, then $\zeta(i)=N$.
\end{itemize}
\end{definition}

With this dividing index, we can now impose a block partition on the kernel matrix. The dividing index decomposes $K$ into two complementary sub-blocks: the lower block $K_L=[\check{k}_{ij}]\in \mbr^{N\times N}$, where   
\begin{equation*}
    \check{k}_{ij}=
    \begin{cases}
        k_{ij}=e^{-(x_i-y_j)/\veps},\ &1\leq i\leq N,\; 1\leq j \leq \zeta(i),   \\
        0, \  &1\leq i\leq N, \;\zeta(i)< j \leq N,  
    \end{cases}
    \; 
\end{equation*}
and the upper block $K_U=[\hat{k}_{ij}]\in \mbr^{N\times N}$, where   
\begin{equation*}
    \hat{k}_{ij}=
    \begin{cases}
        0,\ &1\leq i\leq N, \; 1\leq j \leq \zeta(i),   \\
        k_{ij} =e^{-(y_j-x_i)/\veps}, \ &1\leq i\leq N, \; \zeta(i)< j \leq N. 
    \end{cases}
    \; 
\end{equation*}
Through this decomposition, we observe the following relation: For $1\leq i_1\leq i_2 \leq N,$

\begin{equation}
    \label{eq:collinear}
		\begin{aligned}
			&\check{k}_{i_1j}\;/\;\check{k}_{i_2j}\;=\;e^{-(x_{i_1}-x_{i_2})/\veps},\quad 1\leq j \leq \zeta(i_1),\\
			&\hat{k}_{i_1j}\;/\;\hat{k}_{i_2j}\;=\;e^{-(x_{i_2}-x_{i_1})/\veps},\quad \zeta(i_2)<j\leq N.
		\end{aligned}
\end{equation}
This relation shows that both $K_L$ and $K_U$ exhibit a row-wise constant-ratio property wherever the corresponding entries are nonzero.

Crucially, the dividing index also induces a staircase nonzero pattern. Consider $K_L$ as an example, the nonzero entries in row $i$ form a contiguous block from column 1 to column $\zeta(i)$, and these blocks are non-decreasing in length (i.e., $\zeta(i)\leq\zeta(i+1)$), forming a lower staircase. Such specific property, namely staircase nonzero support, together with constant ratios between consecutive rows in overlapping nonzero parts, enables a compact $O(N)$-space representation of $K_L$, as established by the following theorem:



\begin{theorem}[Vector representation of $K_L$]\label{thm:representation of k_L}
The sub-block $K_L\in\mathbb{R}^{N\times N}$ induced by a dividing index $\zeta$, admits the following representation with a ratio vector $\br=[r_i]_{1\leq i\leq N-1}\in \mbr^{N-1}$, and a set of $N$ block-edge vectors $\mathcal{H}=\left\{\bhe_1,
\bhe_2,\dots,\bhe_N\right\}$ with a total of $\zeta(N)$ entries:
\setlength{\arraycolsep}{8pt}
\renewcommand{\arraystretch}{1.5}
\begin{equation}\label{eq:matrix-rep of K_L}
K_L = 
\begin{pmatrix}
\bhe_1 \\
r_1 \bhe_1 &\;\; \bhe_2 \\
r_1 r_2 \bhe_1 &\;\; r_2 \bhe_2 &\;\; \bhe_3 \\
\vdots &\;\; \vdots &\;\; \vdots &\quad \ddots \\
\displaystyle\prod_{t=1}^{N-1} r_t \bhe_1 & \;\;\;\displaystyle\prod_{t=2}^{N-1} r_t \bhe_2 &\;\; \cdots &\quad \cdots &\quad\bhe_N  & \quad&
\end{pmatrix},
\end{equation}
where 
\begin{align}
r_i &= \frac{\check{k}_{i+1,1}}{\check{k}_{i1}}, \quad i=1,\dots,N-1, \label{eq:ratio}\\
\bhe_i &= [\check{k}_{i,\zeta(i-1)+1}\;,\dots,\;\check{k}_{i,\zeta(i)}]\in \mbr^{\zeta(i)-\zeta(i-1)}, \quad i=1,\dots,N.\footnotemark \label{eq:eta}
\end{align}
\end{theorem}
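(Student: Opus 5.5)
We verify the representation entrywise: fix a row $i$ and a column $j$, and check that the $(i,j)$ entry on the right-hand side of \eqref{eq:matrix-rep of K_L} equals $\check{k}_{ij}$.

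The first step is bookkeeping. We set $\zeta(0):=0$. Since $\bx$ is sorted and $\zeta$ is monotone by definition ($x_i<x_{i+1}$ forces $\zeta(i)\le\zeta(i+1)$), the column index set $\{1,\dots,\zeta(N)\}$ is partitioned into the consecutive, possibly empty, segments $J_m=\{\zeta(m-1)+1,\dots,\zeta(m)\}$ for $m=1,\dots,N$. By \eqref{eq:eta}, the vector $\bhe_m$ occupies exactly $J_m$ and has length $|J_m|$, so the total number of entries is $\sum_m|J_m|=\zeta(N)$. In the block layout \eqref{eq:matrix-rep of K_L}, block column $m$ is $J_m$ and row $i$ is the $i$-th block row. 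Columns $j>\zeta(N)$ are zero in every row and are left blank in the display.

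The second step splits into two cases. If $j\in J_m$ with $m>i$, then $j>\zeta(i)$, so $\check{k}_{ij}=0$, which matches the blank upper part. If $m=i$, the entry is $(\bhe_i)_{j-\zeta(i-1)}=\check{k}_{ij}$ by definition. If $m<i$, the claimed entry is $\bigl(\prod_{t=m}^{i-1}r_t\bigr)\check{k}_{mj}$.

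To handle the case $m<i$, we telescope the ratio relation \eqref{eq:collinear}. For $j\le\zeta(m)\le\zeta(t)$ and every $m\le t\le i-1$, both $\check{k}_{tj}$ and $\check{k}_{t+1,j}$ are nonzero and equal to $e^{-(x_t-y_j)/\veps}$ and $e^{-(x_{t+1}-y_j)/\veps}$ respectively, so $\check{k}_{t+1,j}/\check{k}_{tj}=e^{-(x_{t+1}-x_t)/\veps}$. This ratio is independent of $j$.

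We then identify this ratio with $r_t$ from \eqref{eq:ratio}. This is the main obstacle, since \eqref{eq:ratio} is stated using column $1$, which requires $\check{k}_{t1}\neq0$, that is, $\zeta(t)\ge1$. When $\zeta(t)\ge1$, the identification follows from the previous paragraph with $j=1$. When $\zeta(t)=0$, we have $J_1=\dots=J_t=\emptyset$, so no nonempty $\bhe_m$ with $m\le t$ is ever multiplied by $r_t$, and $r_t$ never actually appears in a nonzero entry. We therefore interpret $r_t:=e^{-(x_{t+1}-x_t)/\veps}$ in all cases, which agrees with \eqref{eq:ratio} whenever the latter is defined.

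Multiplying these ratios for $t=m,\dots,i-1$ gives $\check{k}_{ij}=\prod_{t=m}^{i-1}r_t\,\check{k}_{mj}$. Here $\check{k}_{mj}$ is exactly the corresponding entry of $\bhe_m$, which completes the entrywise identification.
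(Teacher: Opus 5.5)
Your proof is correct and takes essentially the same route as the paper: you check each entry, handle the $m<i$ case by telescoping the column-independent row ratio $e^{-(x_{t+1}-x_t)/\veps}$ and identifying it with the column-1 ratio $r_t$, and note that every other entry either belongs to $\bhe_i$ or is zero. You treat the degenerate case $\zeta(t)=0$ more explicitly than the paper does, and since you show that such an $r_t$ only ever multiplies empty blocks, the paper's footnote convention $r_t=1$ works just as well, so you do not need to redefine $r_t$.
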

\footnotetext{For notational convenience, we additionally define $\zeta(0)=0$. Define $r_i=1$ if the denominator $\check{k}_{i1}$ is zero. The dimension of $\bhe_i$ vanishes when $\zeta(i)=\zeta(i-1)$.} 
\begin{proof}
Consider any entry $\check{k}_{ij}$ of $K_L$. If there is some $s<i$ such that $\zeta(s-1) < j \leq \zeta(s)$, then we have
\begin{equation*}
    \check{k}_{ij}=\check{k}_{sj}\;\prod\limits_{t=s}^{i-1}\frac{\check{k}_{t+1,j}}{\check{k}_{tj}}=\check{k}_{sj}\;\prod\limits_{t=s}^{i-1}\frac{\check{k}_{t+1,1}}{\check{k}_{t1}}=\check{k}_{sj}\;\prod\limits_{t=s}^{i-1} r_t,
\end{equation*}
where $\check{k}_{sj}\in \bhe_s$. Otherwise $j>\zeta(i-1)$, then $\check{k}_{ij}\in\bhe_i$ or $\check{k}_{ij} = 0$. 
\end{proof}

An analogous conclusion holds for $K_U$ as well:

\begin{theorem}[Vector representation of $K_U$]\label{thm:representation of K_U}
The sub-block $K_U\in\mathbb{R}^{N\times N}$ induced by a dividing index $\zeta$, admits the following representation with a ratio vector $\br'=[r'_i]_{1\leq i\leq N-1}\in \mbr^{N-1}$, and a set of $N$ block-edge vectors $\mathcal{H}'=\left\{\bhe'_1,
\bhe'_2,\dots,\bhe'_N\right\}$ with a total of $\left(N-\zeta(1)\right)$ entries:
\setlength{\arraycolsep}{8pt}
\renewcommand{\arraystretch}{1.5}
\begin{equation}\label{eq:matrix-rep of K_U}
K_U = 
\begin{pmatrix}
\quad&\bhe'_1 \;\;\;& \cdots\;\;& \cdots & \displaystyle\prod_{t=1}^{N-2} r'_t \bhe'_{N-1} &\;\;\displaystyle\prod_{t=1}^{N-1} r'_t \bhe'_N\\
& &\ddots &\vdots&\;\;\vdots & \vdots\\
&& & \;\;\bhe'_{N-2} &\;\;r'_{N-2}\bhe'_{N-1} &\;\;r'_{N-2}r'_{N-1}\bhe'_N \\
&& & &\;\; \bhe'_{N-1} & \;\;r'_{N-1}\bhe'_N \\
&& & & &\;\;\bhe'_N  
\end{pmatrix},
\end{equation}
where 
\begin{align}
    r'_{i} &= \frac{\hat{k}_{i,N}}{\hat{k}_{i+1,N}}, \quad i=1,\dots,N-1,\\
    \bhe'_i &= [\hat{k}_{i,\zeta(i)+1}\;,\dots,\;\hat{k}_{i,\zeta(i+1)}], \quad i=1,\dots,N.\footnotemark
\end{align}
\end{theorem}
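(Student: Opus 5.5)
We mirror the proof of Theorem~\ref{thm:representation of k_L}, now reading the staircase from the right. First we record the support structure of $K_U$. Since $\zeta$ is non-decreasing, the nonzero entries of row $i$ of $K_U$ occupy columns $\zeta(i)+1,\dots,N$, and this set shrinks as $i$ grows. For each column $j>\zeta(1)$ we then define its \emph{last} nonzero row $s(j)=\max\{s:\zeta(s)<j\}$. The block-edge vector $\bhe'_s$ is exactly the set of entries in row $s$ that vanish in row $s+1$, namely the columns $\zeta(s)+1,\dots,\zeta(s+1)$. For this to work at $s=N$ we set $\zeta(N+1)=N$, so that $\bhe'_N$ contains the columns $\zeta(N)+1,\dots,N$. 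With this convention the vectors $\bhe'_1,\dots,\bhe'_N$ partition the columns $\zeta(1)+1,\dots,N$. This gives the total entry count $N-\zeta(1)$, in the same way that the count $\zeta(N)$ arose for $K_L$.

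Next we verify the ratio identity. For $i<i'$ and $j>\zeta(i')$, the relation \eqref{eq:collinear} gives $\hat k_{ij}/\hat k_{i'j}=e^{-(x_{i'}-x_i)/\veps}$, which does not depend on $j$. In particular, whenever column $N$ is nonzero in rows $t$ and $t+1$ (that is, $\zeta(t+1)<N$), we get
\[
\frac{\hat k_{tj}}{\hat k_{t+1,j}}=\frac{\hat k_{tN}}{\hat k_{t+1,N}}=r'_t
\qquad\text{for all } j>\zeta(t+1).
\]

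Now fix an entry $\hat k_{ij}$ with $j>\zeta(i)$, and let $s=s(j)\ge i$. If $s=i$, the entry lies in $\bhe'_i$. If $s>i$, we telescope:
\[
\hat k_{ij}=\hat k_{sj}\prod_{t=i}^{s-1}\frac{\hat k_{tj}}{\hat k_{t+1,j}}=\hat k_{sj}\prod_{t=i}^{s-1}r'_t ,
\]
where $\hat k_{sj}\in\bhe'_s$. This is precisely the $(i,s)$ block of \eqref{eq:matrix-rep of K_U}. Finally, every entry with $j\le\zeta(i)$ is zero, which accounts for the empty lower-left part of the display.

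The step needing care is the degenerate case. If $\zeta(t+1)=N$ for some $t$, then $\hat k_{t+1,N}=0$ and $r'_t$ is undefined by the formula. We adopt the convention of the footnote to Theorem~\ref{thm:representation of k_L} and set $r'_t=1$ in that case. This causes no inconsistency: in that situation row $t+1$ and all later rows of $K_U$ vanish, so no nonzero entry $\hat k_{ij}$ uses $r'_t$ in its product (we always have $s(j)\le t$ there). The corresponding $\bhe'$ vectors are empty, so those rows match the display trivially.
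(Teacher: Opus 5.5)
Your proof is correct and takes the paper's approach. The paper proves only the $K_L$ case, by telescoping column ratios against the reference column $1$, and calls the $K_U$ case analogous; your argument is exactly that mirror, telescoping against column $N$, and you also make explicit the last-nonzero-row index $s(j)$ and the degenerate case $\zeta(t+1)=N$ (where the convention $r'_t=1$ is never actually used), which the paper leaves implicit.
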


\footnotetext{For notational convenience, we additionally define $\zeta(N+1)=N$. Define $r'_i=1$ if the denominator $\hat{k}_{i+1,N}$ is zero. The dimension of $\bhe'_i$ vanishes when $\zeta(i)=\zeta(i+1)$.}

To illustrate this vector representation, consider the following example:
\begin{example}
Consider two meshes $\bx=(1,3,7,9,12)$, $\by=(2,5,6,9,10)$. The kernel matrix $K=[k_{ij}]\in \mbr^{5\times5}$ is given by
\renewcommand{\arraystretch}{1.5}
\setlength{\arraycolsep}{8pt}
$$
K = \begin{pmatrix}
\lambda^1 & \lambda^4 & \lambda^5 & \lambda^8 & \lambda^9 \\
\lambda^1 & \lambda^2 & \lambda^3 & \lambda^6 & \lambda^7 \\
\lambda^5 & \lambda^2 & \lambda^1 & \lambda^2 & \lambda^3 \\
\lambda^7 & \lambda^4 & \lambda^3 & \lambda^0 & \lambda^1 \\
\lambda^{10} & \lambda^7 &\lambda^6 & \lambda^3 & \lambda^2
\end{pmatrix},
$$
where $\lambda=e^{-1/\varepsilon}$. This yields a dividing index $\zeta = (0, 1, 3, 4, 5)$. The corresponding sub-blocks are 
$$
K_L = \begin{pmatrix}
0 & 0 & 0 & 0 & 0 \\
\lambda^1 & 0 & 0 & 0 & 0 \\
\lambda^5 & \lambda^2 & \lambda^1 & 0 & 0 \\
\lambda^7 & \lambda^4 & \lambda^3 & \lambda^0 & 0 \\
\lambda^{10} & \lambda^7 &\lambda^6 & \lambda^3 & \lambda^2
\end{pmatrix},\quad
K_U = \begin{pmatrix}
\lambda^1 & \lambda^4 & \lambda^5 & \lambda^8 & \lambda^9 \\
0 & \lambda^2 & \lambda^3 & \lambda^6 & \lambda^7 \\
0 & 0 & 0 & \lambda^2 & \lambda^3 \\
0 & 0 & 0 & 0 & \lambda^1 \\
0 & 0 & 0 & 0 & 0
\end{pmatrix}.
$$

By Theorem~\ref{thm:representation of k_L}, the ratio vector $\br$ of $K_L$ is given by 
\begin{equation*}
    \br=[1,\lambda^{5-1},\lambda^{7-5},\lambda^{10-7}]=[1,\lambda^{4},\lambda^{2},\lambda^{3}],
\end{equation*}
and the block-edge vectors $\mathcal{H}$ are
$$
\bhe_1 = [], \quad
\bhe_2 =[\lambda^{1}], \quad
\bhe_3 = [\lambda^{2},\lambda^{1}], \quad
\bhe_4 = [\lambda^{0}], \quad
\bhe_5 = [\lambda^{2}].
$$
Starting from $\bhe_1=[]$, we can reconstruct the nonzero part of $K_L$ iteratively: for $i=1,\ldots,4$, scale the previously recovered row by $r_i$ to obtain the shared prefix of row $i+1$, and then append the block-edge vector $\bhe_{i+1}$.
Similarly, utilizing Theorem~\ref{thm:representation of K_U}, we derive the representation of $K_U$ as
\begin{equation*}
    \br' = [\lambda^{9-7},\lambda^{7-3},\lambda^{3-1},1] = [\lambda^{2},\lambda^{4},\lambda^{2},1],
\end{equation*}
and
$$
\bhe'_1 = [\lambda^{1}], \quad
\bhe'_2 =[\lambda^{2},\lambda^{3}], \quad
\bhe'_3 = [\lambda^{2}], \quad
\bhe'_4 = [\lambda^{1}], \quad
\bhe'_5 = [].
$$

\end{example}

We refer to the property that $K_L$ and $K_U$ admit a representation via a ratio vector and block-edge vectors as \textbf{quasi-collinearity}. This notion generalizes the collinear triangular matrix property introduced in~\cite{liao2024fast}. Such a property enables a linear-complexity dynamic programming approach for matrix-vector multiplication, as detailed in the next subsection.

\subsection{Fast matrix-vector multiplication}
\label{subsec:fmvm}
The key step of the Sinkhorn algorithm is to iteratively update $\bphi$ and $\bpsi$ through~\eqref{eq:iteration}, where the $O(N^2)$ matrix-vector multiplications dominate the complexity. In this subsection, we present an efficient method that computes $K\bpsi$ and $K^\top\bphi$ with linear complexity. The following discussion will focus on the computation of $K\bpsi$, as the approaches for the remaining cases are similar.

First, we decompose $K\bpsi=K_L\bpsi+K_U\bpsi \triangleq\bp+\bq$, where
\begin{equation}
\label{eq:decompose to p+q}
    p_i\;=\;\sum\limits_{j=1}^{\zeta(i)} \check{k}_{ij}\psi_j,\quad
    q_i=\sum\limits_{j=\zeta(i)+1}^N \hat{k}_{ij}\psi_j,\quad i=1,2,\cdots,N.
\end{equation}

By Theorem~\ref{thm:representation of k_L}, $K_L$ is fully specified by a ratio vector $\br$ and block-edge vector set $\mathcal{H}=\{\bhe_i\}_{i=1}^N$. We similarly partition $\bpsi_{1:\zeta(N)}$ into $N$ segments: $\bpsi_{1:\zeta(N)}=[\bpsi_1^\top,\bpsi_2^\top,\dots,\bpsi_N^\top]^\top$, where
\begin{equation*}
   \bpsi_i=[\psi_{\zeta(i-1)+1},\dots,\psi_{\zeta(i)}]^\top,\quad i=1,\dots,N.
\end{equation*}
Note that for each $i$, $\bpsi_i$ has the same dimension as $\bhe_i$.
Instead of computing $p_i$ directly, we start with the initial result $p_1=\sum\limits_{j=1}^{\zeta(1)}\check{k}_{1j}\psi_j=\bhe_1\cdot\bpsi_1$, and use the recursive computation given by
\begin{equation}
    \begin{aligned}
    \label{eq:p by dynamic programming}
    p_{i+1} = \sum_{s=1}^{i}\;\;\sum_{j=\zeta(s-1)+1}^{\zeta(s)}&\check{k}_{i+1,j}\psi_j+\sum_{j=\zeta(i)+1}^{\zeta(i+1)}\check{k}_{i+1,j}\psi_j\\
    &=\sum_{s=1}^i \Bigl(\prod_{t=s}^i r_t \bhe_s\Bigr)\cdot\bpsi_s + \bhe_{i+1}\cdot\bpsi_{i+1}\\
    &\qquad= r_i \Bigl( \sum_{s=1}^{i-1} \Bigl(\prod_{t=s}^{i-1} r_t \bhe_s \Bigr) \cdot\bpsi_s + \bhe_i\cdot\bpsi_i \Bigr) + \bhe_{i+1}\cdot\bpsi_{i+1} \\
    &\qquad\quad\qquad\qquad\qquad= r_i p_i + \bhe_{i+1}\cdot\bpsi_{i+1}, \quad i=1,\dots,N-1.
    \end{aligned}  
\end{equation}
Similarly, $\bq$ can be computed via a backward recursion
\begin{equation}
    \label{eq:q by dynamic programming}
    q_N=\bhe'_N \cdot \bpsi'_N,\quad q_i=r'_i q_{i+1}+\bhe'_i \cdot \bpsi'_i, \quad i=N-1,\dots,1.
\end{equation}
Here $\br'$ is the ratio vector of $K_U$ and  $\mathcal{H}'=\{\bhe_i'\}_{i=1}^N$ is the set of block-edge vectors. $[\bpsi'_1,\dots,\bpsi'_N]$ is the corresponding partition of $\bpsi_{\zeta(1)+1:N}$, where each has the same dimension as $\bhe'_i$.

Using~\eqref{eq:p by dynamic programming} and~\eqref{eq:q by dynamic programming} to perform Sinkhorn iteration, we obtain FINOM for the one-dimensional case. The pseudo-code is summarized in Algorithm~\ref{alg:FINOM}.
\begin{algorithm}
\setstretch{1.15}
    \caption{1D FINOM}
    \label{alg:FINOM}
    \begin{algorithmic}[1]
        \Require{$\bx,\by \in \mbr^{N}$;\;$\;\bu,\bv \in \mbr^N; \;\;\rm{itr\underline{~}max}\in \mbn_+$; \;$\varepsilon,\rm{tol} \in \mbr_+$;\;$K \in \mbr^{ N\times N}$} 
        \State{Calculate the dividing index for $K,K^\top$; construct the vector representation $\br,\{\bhe_i\}$ for $K_L$,\;$\br',\{\bhe'_i\}$ for $K_U$,\;$\bs,\{\bxi_i\}$ for $K^\top_L$,\;$\bs',\{\bxi'_i\}$ for $K^\top_U$}
        \State {$\bphi,\bpsi\gets\frac{1}{N}\mathbf{1}_N$;\;$\bp,\bq \gets \mathbf{0}_N$;\;$\ell\gets0$}
        \While{($\ell<\rm{itr\underline{~}max}$) and ($\left\Vert\operatorname{diag}\left(\bpsi\right)K^\top\bphi-\bv\right\Vert_1>\rm{tol}$)}
        \State{$p_1\gets \bxi_1 \cdot\bphi_1,\quad q_N\gets \bxi'_N \cdot\bphi'_N$}
        \For{$i=1\;:\;N-1$}
        \State{$p_{i+1}\gets s_ip_i+\bxi_{i+1}\cdot\bphi_{i+1}$}
        \State{$q_{N-i}\gets s'_{N-i}q_{N-i+1}+\bxi'_{N-i}\cdot\bphi'_{N-i}$}
        \EndFor
        \State{$\bpsi\gets \bv \oslash \left(\bp+\bq\right)$}
        \State{$p_1\gets \bhe_1 \cdot\bpsi_1,\quad q_N\gets \bhe'_N \cdot\bpsi'_N$}
        \For{$i=1\;:\;N-1$}
        \State{$p_{i+1}\gets r_ip_i+\bhe_{i+1}\cdot\bpsi_{i+1}$}
        \State{$q_{N-i}\gets r'_{N-i}q_{N-i+1}+\bhe'_{N-i}\cdot\bpsi'_{N-i}$}
        \EndFor
        \State{$\bphi\gets \bu \oslash (\bp+\bq)$}
        \State{$\ell\gets\ell+1$}
        \EndWhile
        \State \vspace{2mm} 
        \Return {$\Gamma=\operatorname{diag}\left(\bphi\right)K\operatorname{diag}\left(\bpsi\right)$}
    \end{algorithmic}
\end{algorithm}

Note that the computation of $p_{i+1}$ in~\eqref{eq:p by dynamic programming} costs
$\left(\zeta(i+1)-\zeta(i)+1\right)$ multiplications and $\left(\zeta(i+1)-\zeta(i)\right)$ additions.
Iterating over all $i$, the total number of multiplications for computing $\bp$ is at most
$$
\zeta(1)+\sum\limits_{i=1}^{N-1}\left(\zeta(i+1)-\zeta(i)+1\right)=N+\zeta(N)-1,
$$
and the number of additions is at most 
$$
\zeta(1)-1+\sum\limits_{i=1}^{N-1}\left(\zeta(i+1)-\zeta(i)\right)=\zeta(N)-1.
$$
Analogously, computing $\bq$ by~\eqref{eq:q by dynamic programming} requires at most $\left(2N - \zeta(1) - 1\right)$ multiplications and $\left(N - \zeta(1) - 1\right)$ additions. Consequently, the matrix-vector multiplication  $K\bpsi$ requires no more than
\begin{equation}
\label{eq:number of multiplications}
    \left(N+\zeta(N)-1\right) + \left(2N - \zeta(1) - 1 \right) = 3N+\zeta(N)-\zeta(1)-2 < 4N,
\end{equation}
multiplications, and 
\begin{equation}
\label{eq:number of additions}
    \left(\zeta(N)-1\right) + \left(N - \zeta(1) - 1\right) + N = 2N+\zeta(N)-\zeta(1)-2 < 3N,
\end{equation}
additions. The additional term $+N$ in~\eqref{eq:number of additions} accounts for the extra additions incurred when computing the sum $\bp+\bq$. Therefore, the complexity of FINOM is $O(N)$.

\begin{remark}
\label{rem:log-domain-stabilization}
    In practice, the log-domain stabilization~\cite{chizat2018scaling} technique is widely used to ensure the numerical stability of the Sinkhorn algorithm, which can also be incorporated into our fast matrix-vector multiplication. The log-domain stabilization uses vectors $\ba$ and $\bb$ to ``absorb'' the excessive part of iterates $\bphi^{(t)}$ and  $\bpsi^{(t)}$:
    \begin{equation*}
        \ba\leftarrow\ba+\varepsilon\ln\brac{\bphi^{(t)}},\quad 
        \bb\leftarrow\bb+\varepsilon\ln\brac{\bpsi^{(t)}},\quad
        \bphi^{(t)}\leftarrow\mathbf{1}_N,\quad
        \bpsi^{(t)}\leftarrow\mathbf{1}_N,
    \end{equation*}
    and the kernel is replaced with $\operatorname{diag}\left(e^{\ba/\varepsilon}\right)K\operatorname{diag}\left(e^{\bb/\varepsilon}\right)$. Correspondingly, we have to replace $\br$ and $\bhe$ in~\eqref{eq:ratio} and~\eqref{eq:eta} with
    \begin{equation*}
        r_i\leftarrow e^{\brac{a_{i+1}-a_i}/\varepsilon}\cdot r_i,\quad \check{k}_{i,j}\leftarrow e^{\brac{a_{i}+b_j}/\varepsilon}\check{k}_{i,j}.
    \end{equation*}
    The updates for $K_U$ and for the transposed case are analogous. This leads to the stabilized fast algorithm with linear complexity.
\end{remark}

\section{Extension to high dimensions}
\label{sec:2d}
In this section, we discuss the extension of FINOM to the 2D case, which already contains the essential idea of the higher-dimensional extension. More precisely, the 2D and higher-dimensional algorithms are obtained by applying the 1D fast matrix-vector multiplication in Section~\ref{subsec:fmvm} successively along each coordinate direction.

To avoid confusion, we record the following notation.

\begin{itemize}[leftmargin=2.2em,itemsep=1pt,topsep=2pt]

    \item $K\in \mbr^{NM\times NM}$: the 2D kernel matrix, which will be shown to satisfy $K=K_y\otimes K_x$;
    
    \item $K_{[i,j]}$: the $(i,j)$-th block of $K$;
    
    \item $K_x\in\mbr^{N\times N}$: the kernel matrix generated by the 1D point sets $\{x_k^1\}_{k=1}^N$ and $\{x_l^2\}_{l=1}^N$ along the $x$-direction;
    
    \item $K_y\in\mbr^{M\times M}$: the kernel matrix generated by the 1D point sets $\{y_i^1\}_{i=1}^M$ and $\{y_j^2\}_{j=1}^M$ along the $y$-direction;
    
    \item For any matrix, its $(i,j)$-th entry is denoted by $(\cdot)_{ij}$ (e.g., $(K_x)_{ij}$, $(K_y)_{ij}$, or $(K_{[i,j]})_{i'j'}$);
    
    \item $\operatorname{vec}(\cdot)$: the column-major vectorization of a matrix.
\end{itemize}

\subsection{The 2D kernel matrix}
Consider two probability distributions
$$
\bu=
\begin{pmatrix}
    u_{11} & u_{12} & \cdots & u_{1M}\\
    u_{21} & u_{22} & \cdots & u_{2M}\\
    \vdots & \vdots & \ddots & \vdots\\
    u_{N1} & u_{N2} & \cdots & u_{NM}
\end{pmatrix},\qquad
\bv=
\begin{pmatrix}
    v_{11} & v_{12} & \cdots & v_{1M}\\
    v_{21} & v_{22} & \cdots & v_{2M}\\
    \vdots & \vdots & \ddots & \vdots\\
    v_{N1} & v_{N2} & \cdots & v_{NM}
\end{pmatrix},
$$
on two 2D non-uniform meshes
$$
\bx^1\times \by^1,\qquad \bx^2\times \by^2,
$$
where $\bx^1,\bx^2\in\mbr^N$, $\by^1,\by^2\in\mbr^M$, and $\times$ represents the Cartesian product of the 1D positions. We order the grid points in column-major order. The corresponding kernel matrix $K\in\mbr^{NM\times NM}$ can then be written as an $M\times M$ block matrix with blocks of size $N\times N$:
$$
\renewcommand*{\arraystretch}{2}
 \setlength{\arraycolsep}{3pt} 
K=\left(
\begin{array}{c|c|c|c|c}
	K_{[1,1]}&K_{[1,2]}&K_{[1,3]}&\cdots &K_{[1,M]}\\
	\hline
	K_{[2,1]}&K_{[2,2]}&K_{[2,3]}&\cdots&K_{[2,M]}\\
	\hline
	K_{[3,1]}&K_{[3,2]}&K_{[3,3]}&\cdots&K_{[3,M]}\\
	\hline
	\vdots&\vdots&\vdots&\ddots&\vdots \\
	\hline
	K_{[M,1]}&K_{[M,2]}&K_{[M,3]}&\cdots&K_{[M,M]}\\
\end{array}
\right).
$$
For each fixed $1\le i,j\le M$, the block $K_{[i,j]}$ is the $N\times N$ kernel matrix generated by the two sets of points
$$
\{(x^1_k,y^1_i)\}_{k=1}^{N},\qquad \{(x^2_l,y^2_j)\}_{l=1}^{N}.
$$
In other words, within the block $K_{[i,j]}$, the second coordinates $y^1_i$ and $y^2_j$ are fixed, while the first coordinates $x^1_k$ and $x^2_l$ vary. Therefore,
\begin{equation}
\label{eq:2d-block-entry}
\left(K_{[i,j]}\right)_{kl}
=e^{-\brac{\abs{x^1_k-x^2_l}+\abs{y^1_i-y^2_j}}/\veps}
=e^{-\abs{x^1_k-x^2_l}/\veps}\,e^{-\abs{y^1_i-y^2_j}/\veps}.
\end{equation}
This shows that every block shares the same matrix and differs only by a scalar factor $e^{-\abs{y^1_i-y^2_j}/\veps}$. More precisely, let
\begin{equation}
\label{eq:2d-1d-kernels}
K_x=\left[e^{-\abs{x^1_k-x^2_l}/\veps}\right]_{k,l=1}^{N}\in\mbr^{N\times N},\qquad
K_y=\left[e^{-\abs{y^1_i-y^2_j}/\veps}\right]_{i,j=1}^{M}\in\mbr^{M\times M},
\end{equation}
be the 1D kernel matrices in the two coordinate directions, then
\begin{equation*}
K_{[i,j]}=(K_y)_{ij}K_x,\qquad 1\le i,j\le M.
\end{equation*}
Consequently,
\begin{equation}
\label{eq:2d-kron}
K=K_y\otimes K_x,
\end{equation}
where $\otimes$ is the Kronecker product.

\subsection{Fast matrix-vector multiplication}
The structure \eqref{eq:2d-kron}, together with the 1D fast matrix-vector multiplication developed in Section~\ref{subsec:fmvm}, enables the efficient computation of $K\bpsi$ and $K^\top\bphi$ for $\bphi,\bpsi\in\mbr^{NM}$.

Reshape $\bpsi$ into an $N\times M$ matrix $\Psi$ in column-major order, so that $\bpsi=\operatorname{vec}(\Psi)$. Using~\eqref{eq:2d-kron} and the identity 
\[\operatorname{vec}(AXB)=\left(B^\top\otimes A\right)\operatorname{vec}(X),\]
where $A,X,$ and $B$ are matrices such that the product $AXB$ is well-defined, we have
\begin{equation}
\label{eq:2d-kpsi}
K\bpsi=(K_y\otimes K_x)\bpsi=\operatorname{vec}\brac{(K_x\Psi)K_y^\top}.
\end{equation}
In the identity above, by taking $A=K_x,\;X=\Psi,$ and $B=K_y^\top$, we directly obtain the expression on the right-hand side.
Similarly, reshaping $\bphi$ into an $N\times M$ matrix $\Phi$ in column-major order, so that $\bphi=\operatorname{vec}(\Phi)$, we obtain
\begin{equation}
\label{eq:2d-ktphi}
K^\top\bphi=(K_y^\top\otimes K_x^\top)\bphi=\operatorname{vec}\brac{(K_x^\top\Phi)K_y}.
\end{equation}
Incorporating \eqref{eq:2d-kpsi} and \eqref{eq:2d-ktphi} into the Sinkhorn iteration produces the  2D FINOM, as summarized in Algorithm~\ref{alg:2D-FINOM}.

\begin{algorithm}[H]
\setstretch{1.15}
    \caption{2D FINOM}
    \label{alg:2D-FINOM}
    \begin{algorithmic}[1]
        \Require{$\bx^1,\bx^2\in\mbr^{N}$; $\by^1,\by^2\in\mbr^{M}$; $\bu,\bv\in\mbr^{N\times M}$; $\rm{itr\underline{~}max}\in\mbn_+$; $\veps,\rm{tol}\in\mbr_+$}
        \State{$\Phi,\Psi \leftarrow \frac{1}{NM}\mathbf{1}_{N\times M}$; $\bphi\leftarrow\operatorname{vec}(\Phi)$; $\bpsi\leftarrow\operatorname{vec}(\Psi)$}
        \State{$S,T\in\mbr^{N\times M}$; $\ell\leftarrow 0$}
        \State{Construct the vector representations of $K_x$, $K_x^\top$, $K_y$, and $K_y^\top$ with $\bx^1,\bx^2,\by^1,\by^2$}
        \While{$(\ell<\rm{itr\underline{~}max})$ and $\brac{\norm{\operatorname{diag}(\bpsi)K^\top\bphi-\operatorname{vec}(\bv)}_1>\rm{tol}}$}
        \State{$S\leftarrow K_x^\top\Phi$, $T\leftarrow SK_y$}
        \State{$\Psi\leftarrow \bv\oslash T$, $\bpsi\leftarrow\operatorname{vec}(\Psi)$}
        \State{$S\leftarrow K_x\Psi$, $T\leftarrow SK_y^\top$}
        \State{$\Phi\leftarrow \bu\oslash T$, $\bphi\leftarrow\operatorname{vec}(\Phi)$}
        \State{$\ell\leftarrow\ell+1$}
        \EndWhile
        \State \vspace{2mm}
        \Return{$\Gamma=\operatorname{diag}(\bphi)K\operatorname{diag}(\bpsi)$}
    \end{algorithmic}
\end{algorithm}
We now analyze the computational complexity of Algorithm~\ref{alg:2D-FINOM}. The product $S\leftarrow K_x\Psi$ in line 7 can be computed by applying the 1D fast matrix-vector multiplication to each of the $M$ columns of $\Psi$, yielding a total cost of $O(MN)$. Then $T\leftarrow SK_y^\top$ is carried out by applying the same 1D fast matrix-vector multiplication to each of the $N$ rows of $S$, with a total cost of $O(NM)$. Hence, $K\bpsi$ in \eqref{eq:2d-kpsi} can be evaluated in $O(NM)$ operations. In line 5, by a similar columnwise/rowwise strategy,  $K^\top\bphi$ in \eqref{eq:2d-ktphi} can also be computed in $O(NM)$ operations. Consequently, each iteration of Algorithm~\ref{alg:2D-FINOM} has linear complexity with respect to the total number of grid points $NM$.

\vspace{0.4cm} 

\begin{remark}
\label{rem:2d-log-stab}
The 2D FINOM can also be combined with log-domain stabilization. Although the absorbed kernel
\[
K'=\operatorname{diag}(e^{\ba/\veps})K\operatorname{diag}(e^{\bb/\veps})
\]
generally no longer admits a direct Kronecker product representation, the matrix-vector multiplications
$K'\bpsi$ and $(K')^\top\bphi$ can still be carried out efficiently by applying the 1D stabilized fast algorithm
successively along each coordinate direction. In this way, the stabilized 2D algorithm preserves the same
linear complexity $O(NM)$ with respect to the total number of grid points. For completeness, the detailed
derivation and the corresponding algorithm are provided in Appendix~\ref{app:2d-log-stab}.
\end{remark}

\vspace{0.4cm} 

\begin{remark}
The same idea extends directly to higher dimensions. If the mesh has sizes $n_1,\dots,n_d$ and the kernel factorizes as $K=K_d\otimes\cdots\otimes K_1$, then each application of $K$ or $K^\top$ can be carried out successively along each coordinate direction by the same 1D fast algorithm. Therefore, the per-iteration complexity remains linear in the total number of grid points $\prod_{\alpha=1}^{d}n_\alpha$.
\end{remark}

\section{Numerical Experiments}
\label{sec:experiments}
In this section, we compare FINOM with the standard Sinkhorn algorithm on several one- and two-dimensional test problems. Since FINOM accelerates the matrix-vector multiplications in Sinkhorn without changing the underlying entropic iteration, the two methods are compared under the same regularization parameter, initialization, iteration number, and implementation environment. Both algorithms are implemented in C++. All experiments are conducted on a platform with 128GB RAM and one 14-core Intel(R) Xeon(R) Gold 5117 CPU @2.00GHz. Unless otherwise stated, log-domain stabilization is enabled in both methods for numerical robustness.

For each test, we report the averaged computational time, the corresponding speed-up ratio, and the Frobenius norm of the difference between the transport plans produced by the two algorithms. For brevity, this difference is denoted by $\Delta \Gamma$ in the tables. To compare convergence in wall-clock time, we also plot the time required to reach prescribed marginal errors. Here the marginal error at iteration $\ell$ is defined by
$\norm{\operatorname{diag}\brac{\bpsi^{(\ell)}}K^\top \bphi^{(\ell)}-\bv}_1.$
Unless otherwise stated, all timing results reported in the tables and in the left panels of the timing figures are averaged over 100 independent trials. In contrast, the right panels record the wall-clock time required to reach prescribed marginal errors.

\subsection{1D random distributions on Chebyshev nodes}
We first consider the Wasserstein-1 distance between two random distributions on $[0,1]$, both supported on the $N$ Chebyshev nodes
\begin{equation*}
    x_k=y_k=\frac{1}{2}+\frac{1}{2}\cos\brac{\frac{2k-1}{2N}\pi}, \qquad k=1,\dots,N.
\end{equation*}
The entries of the mass vectors $\bu$ and $\bv$ are independently sampled from the uniform distribution on $[0,1]$ and then normalized to sum to one. Each trial runs 1,000 Sinkhorn iterations with regularization parameter $\varepsilon=0.01$.

\begin{table}[htbp]
    \centering
    \begin{tabular}{ccccc}
        \toprule
        \multirow{2}{*}{$N$} & \multicolumn{2}{c}{Computational time (s)} & \multirow{2}{*}{Speed-up ratio} & \multirow{2}{*}{$\norm{\Delta\Gamma}_F$}\\
        \cmidrule(lr){2-3}
        & FINOM & Sinkhorn & & \\
        \midrule
        500  & $7.05\times 10^{-3}$ & $7.77\times 10^{-1}$ & $1.11\times 10^{2}$ & $4.94\times 10^{-17}$\\
        2000 & $2.60\times 10^{-2}$ & $3.05\times 10^{1}$  & $1.17\times 10^{3}$ & $9.26\times 10^{-18}$\\
        8000 & $1.41\times 10^{-1}$ & $9.05\times 10^{2}$  & $6.46\times 10^{3}$ & $5.36\times 10^{-18}$\\
        \bottomrule
    \end{tabular}
    \caption{1D random distributions on Chebyshev nodes. Comparison between FINOM and the standard Sinkhorn algorithm for different values of $N$ with $\varepsilon=0.01$. Columns 2--4 report the averaged computational times and the corresponding speed-up ratios. Column 5 reports the Frobenius norm of the difference between the transport plans returned by the two methods.}
    \label{table:1Dchebyshev}
\end{table}

\begin{figure}[htbp]
    \centering
    \begin{minipage}{0.48\textwidth}
        \centering
        \includegraphics[trim=3.4cm 9cm 3.4cm 9cm,clip,width=\textwidth]{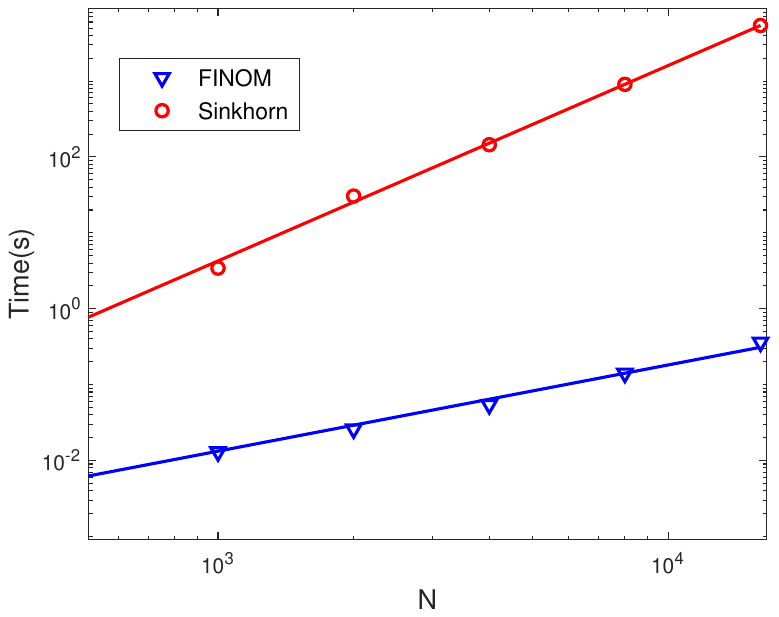}
    \end{minipage}
    \hfill
    \begin{minipage}{0.48\textwidth}
        \centering
        \includegraphics[trim=3.4cm 9cm 3.4cm 9cm,clip,width=\textwidth]{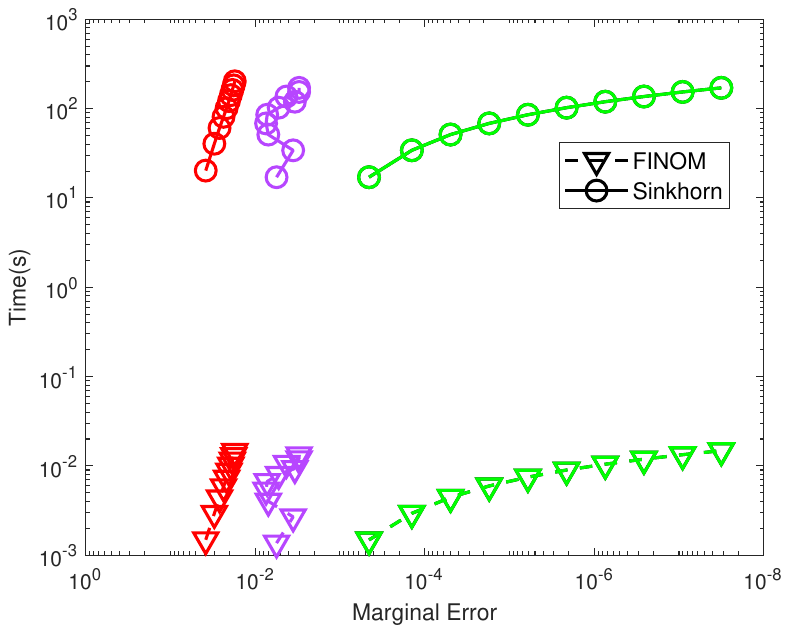}
    \end{minipage}
    \caption{1D random distributions on Chebyshev nodes. Left: computational time versus $N$ with $\varepsilon=0.01$. Right: computational time required to reach prescribed marginal errors for $\varepsilon=0.1$ (green), $0.01$ (pink), and $0.001$ (red), with $N=10,000$.}
    \label{fig:1Dchebyshev}
\end{figure}

Table~\ref{table:1Dchebyshev} shows that FINOM achieves substantial acceleration while producing transport plans that are essentially indistinguishable from those returned by the standard Sinkhorn algorithm. Over the tested range, a least-squares fit of the timing curves in Figure~\ref{fig:1Dchebyshev} (left) gives observed scalings of approximately $O\brac{N^{1.13}}$ for FINOM and $O\brac{N^{2.58}}$ for Sinkhorn. Figure~\ref{fig:1Dchebyshev} (right) further shows that, for the fixed problem size $N=10,000$, FINOM reaches the same marginal-error levels several orders of magnitude faster for all three choices of $\varepsilon$.

\subsection{1D random distributions on random nodes}
We next consider a second one-dimensional random test. The source and target nodes are generated independently by sampling from the uniform distribution on $[0,1]$ and then sorting in ascending order. The entries of the two mass vectors are again independently sampled from the uniform distribution on $[0,1]$ and normalized to sum to one. The regularization parameter and the number of iterations are kept the same as in the previous subsection. The numerical results are summarized in Table~\ref{table:1Duniform} and Figure~\ref{fig:1Duniform}.

\begin{table}[htbp]
    \centering
    \begin{tabular}{ccccc}
        \toprule
        \multirow{2}{*}{$N$} & \multicolumn{2}{c}{Computational time (s)} & \multirow{2}{*}{Speed-up ratio} & \multirow{2}{*}{$\norm{\Delta\Gamma}_F$}\\
        \cmidrule(lr){2-3}
        & FINOM & Sinkhorn & & \\
        \midrule
        500  & $6.48\times 10^{-3}$ & $8.23\times 10^{-1}$ & $1.27\times 10^{2}$ & $3.49\times 10^{-17}$\\
        2000 & $5.87\times 10^{-2}$ & $2.76\times 10^{1}$  & $4.68\times 10^{2}$ & $1.35\times 10^{-17}$\\
        8000 & $3.90\times 10^{-1}$ & $8.87\times 10^{2}$  & $2.27\times 10^{3}$ & $1.30\times 10^{-17}$\\
        \bottomrule
    \end{tabular}
    \caption{1D random distributions on random nodes. Comparison between FINOM and the standard Sinkhorn algorithm for different values of $N$ with $\varepsilon=0.01$. Columns 2--4 report the averaged computational times and the corresponding speed-up ratios. Column 5 reports the Frobenius norm of the difference between the transport plans returned by the two methods.}
    \label{table:1Duniform}
\end{table}

\begin{figure}[htbp]
    \centering
    \begin{minipage}{0.48\textwidth}
        \centering
        \includegraphics[trim=3.4cm 9cm 3.4cm 9cm,clip,width=\textwidth]{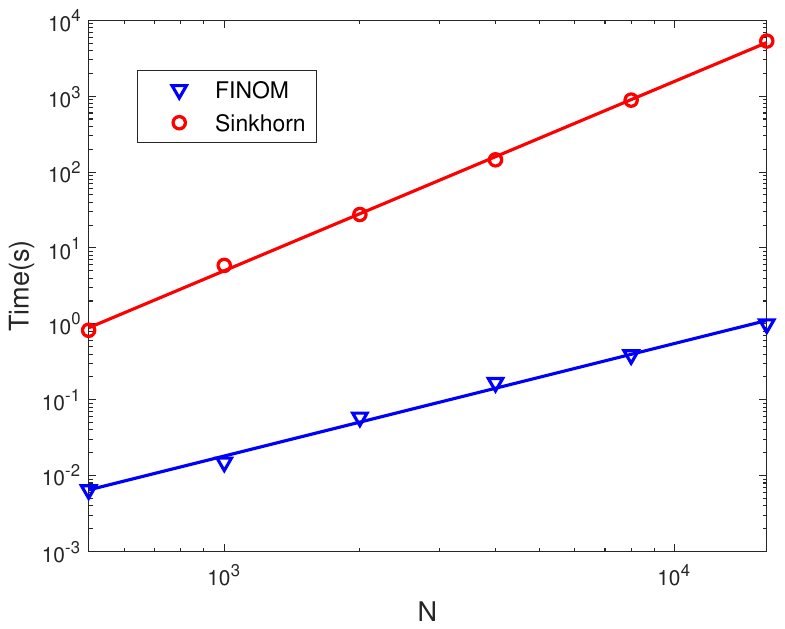}
    \end{minipage}
    \hfill
    \begin{minipage}{0.48\textwidth}
        \centering
        \includegraphics[trim=3.4cm 9cm 3.4cm 9cm,clip,width=\textwidth]{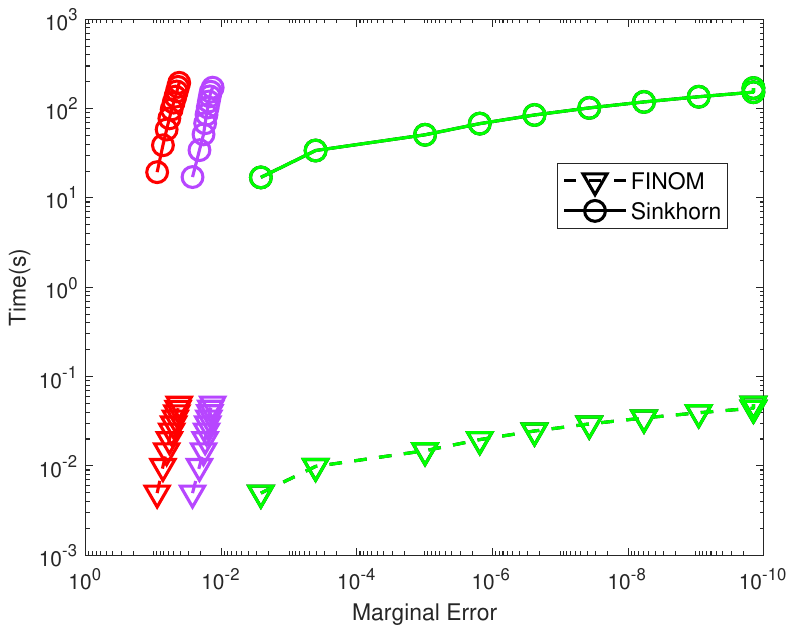}
    \end{minipage}
    \caption{1D random distributions on random nodes. Left: computational time versus $N$ with $\varepsilon=0.01$. Right: computational time required to reach prescribed marginal errors for $\varepsilon=0.1$ (green), $0.01$ (pink), and $0.001$ (red), with $N=10,000$.}
    \label{fig:1Duniform}
\end{figure}

The same conclusion is observed in this second 1D experiment. Table~\ref{table:1Duniform} shows that FINOM remains much faster than the standard Sinkhorn algorithm while the difference between the resulting transport plans stays at the level of machine precision. Compared with the Chebyshev-node case, the observed speed-ups remain of the same order, indicating that the acceleration is robust with respect to irregular node locations. A least-squares fit of the timing curves in Figure~\ref{fig:1Duniform} (left) gives observed scalings of approximately $O\brac{N^{1.18}}$ for FINOM and $O\brac{N^{2.50}}$ for Sinkhorn. Figure~\ref{fig:1Duniform} (right) shows that the wall-clock advantage of FINOM persists across a range of target marginal errors and regularization parameters.

\subsection{2D random distributions}
We finally test the two-dimensional version of FINOM. The two probability arrays are supported on Cartesian-product meshes of size $N\times N$. More precisely, the two meshes are given by $\bx^1\times\by^1$ and $\bx^2\times\by^2$, where the four one-dimensional node vectors $\bx^1,\bx^2,\by^1,\by^2\in\mbr^N$ are generated independently by sampling from the uniform distribution on $[0,1]$ and then sorting in ascending order. The entries of the two probability arrays are also independently sampled from the uniform distribution on $[0,1]$ and then normalized. Each trial runs 1,000 iterations with regularization parameter $\varepsilon=0.01$. The averaged computational times are reported in Table~\ref{table:2Duniform}, while Figure~\ref{fig:2Duniform} displays the corresponding timing curves and the time-to-error comparisons.

\begin{table}[htbp]
    \centering
    \begin{tabular}{ccccc}
        \toprule
        \multirow{2}{*}{$N\times N$} & \multicolumn{2}{c}{Computational time (s)} & \multirow{2}{*}{Speed-up ratio} & \multirow{2}{*}{$\norm{\Delta\Gamma}_F$}\\
        \cmidrule(lr){2-3}
        & FINOM & Sinkhorn & & \\
        \midrule
        $10\times 10$   & $8.10\times 10^{-3}$ & $4.33\times 10^{-2}$ & $5.35\times 10^{0}$ & $2.27\times 10^{-17}$\\
        $20\times 20$   & $1.80\times 10^{-2}$ & $5.26\times 10^{-1}$ & $2.92\times 10^{1}$ & $9.74\times 10^{-17}$\\
        $40\times 40$   & $5.70\times 10^{-2}$ & $1.49\times 10^{1}$  & $2.62\times 10^{2}$ & $4.33\times 10^{-17}$\\
        $80\times 80$   & $2.19\times 10^{-1}$ & $4.41\times 10^{2}$  & $2.01\times 10^{3}$ & $1.16\times 10^{-17}$\\
        $160\times 160$ & $9.35\times 10^{-1}$ & $1.33\times 10^{4}$  & $1.43\times 10^{4}$ & $8.91\times 10^{-18}$\\
        \bottomrule
    \end{tabular}
    \caption{2D random distributions. Comparison between FINOM and the standard Sinkhorn algorithm for different grid sizes $N\times N$ with $\varepsilon=0.01$. Columns 2--4 report the averaged computational times and the corresponding speed-up ratios. Column 5 reports the Frobenius norm of the difference between the transport plans returned by the two methods.}
    \label{table:2Duniform}
\end{table}

\begin{figure}[htbp]
    \centering
    \begin{minipage}{0.48\textwidth}
        \centering
        \includegraphics[trim=3.4cm 9cm 3.4cm 9cm,clip,width=\textwidth]{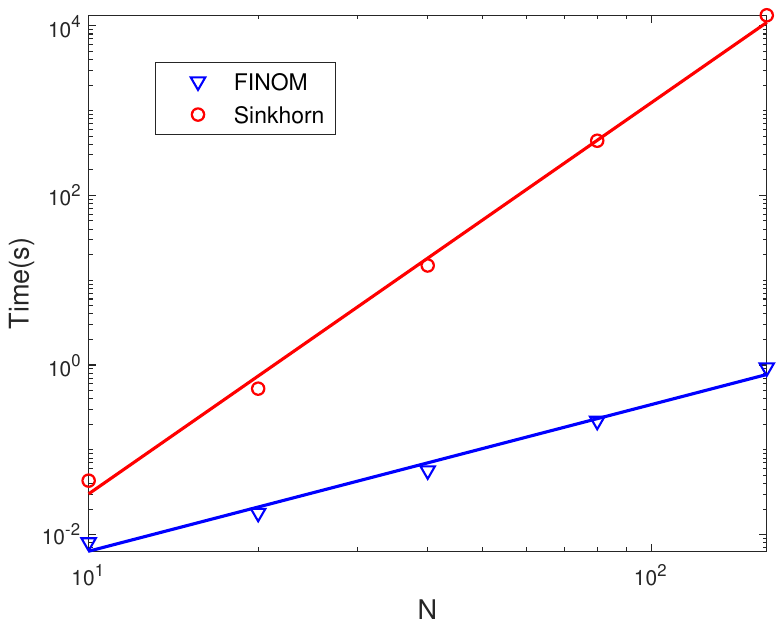}
    \end{minipage}
    \hfill
    \begin{minipage}{0.48\textwidth}
        \centering
        \includegraphics[trim=3.4cm 9cm 3.4cm 9cm,clip,width=\textwidth]{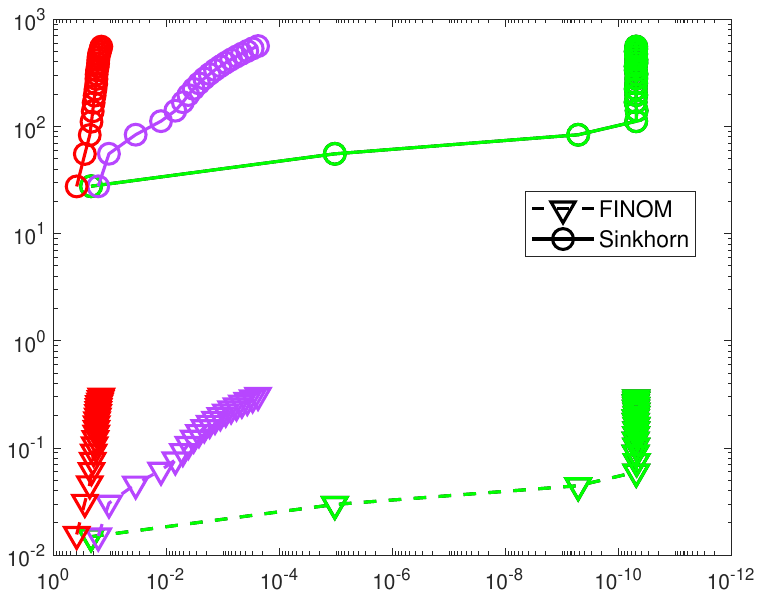}
    \end{minipage}
    \caption{2D random distributions. Left: computational time versus the grid size $N\times N$ with $\varepsilon=0.01$. Right: computational time required to reach prescribed marginal errors for $\varepsilon=0.1$ (green), $0.01$ (pink), and $0.001$ (red), with $N\times N=100\times 100$.}
    \label{fig:2Duniform}
\end{figure}

The results in Table~\ref{table:2Duniform} confirm that the advantage of FINOM is even more pronounced in two dimensions. The transport plans computed by FINOM and Sinkhorn remain essentially identical, while the speed-up ratio grows rapidly with the problem size and exceeds $10^4$ for the largest test reported here. This indicates that the proposed acceleration remains effective on genuinely random non-uniform meshes in two dimensions. Since the total number of grid points is $N^2$, a least-squares fit of the timing curves in Figure~\ref{fig:2Duniform} (left) gives observed scalings of approximately $O\brac{N^{1.74}}$ for FINOM and $O\brac{N^{4.62}}$ for Sinkhorn. We do not interpret the exponent $1.74$ literally as a complexity better than linear, but it indicates that, on the present range of problem sizes, FINOM exhibits the expected near-linear growth with respect to the total number of grid points $N^2$, in sharp contrast with the much steeper growth of the standard Sinkhorn algorithm. Figure~\ref{fig:2Duniform} (right) shows that this wall-clock advantage persists when the target marginal error is varied.

\section{Conclusion}
\label{sec:conclusion}
In this paper, we proposed FINOM, a fast Sinkhorn algorithm for computing the entropy-regularized Wasserstein-1 distance on non-uniform meshes. In one dimension, we introduced the dividing index and showed that the resulting partitioned kernel matrices possess a quasi-collinear structure, which leads to a linear-complexity dynamic programming method for matrix-vector multiplication.  In two and higher dimensions, the fast algorithm is obtained by applying the one-dimensional fast multiplication successively along each coordinate direction, so that the per-iteration complexity remains linear in the total number of grid points.
Numerical experiments on 1D and 2D random distributions show that FINOM significantly outperforms the standard Sinkhorn algorithm in computational time while producing essentially identical transport plans. The observed timings are consistent with the theoretical $O(N)$ behavior in 1D and $O(NM)$ behavior in 2D. 

\section*{Acknowledgements}
The work is supported by the National Natural Science Foundation of China (Grant No. 12271289).

\newpage
\bibliographystyle{siam}
\bibliography{ref.bib}
\newpage
\appendix
\section{2D FINOM with log-domain stabilization}
\label{app:2d-log-stab}

In this appendix, we provide the detailed derivation mentioned in Remark~\ref{rem:2d-log-stab}. While the Kronecker product representation enables efficient computation, it makes log-domain stabilization~\cite{chizat2018scaling} harder to exploit. Under log-domain stabilization, the kernel becomes $K'=\operatorname{diag}(e^{\ba/\varepsilon})K\operatorname{diag}(e^{\bb/\varepsilon})$, with arbitrary absorption vectors $\ba,\bb\in\mbr^{NM}$. However, $K'$ generally lacks a Kronecker factorization compatible with $K = K_y \otimes K_x$, because the absorption vectors $\ba$ and $\bb$ are typically not separable across dimensions. This prevents direct Kronecker-based fast computation under log-domain stabilization.

To address this issue, we do not work directly with the matrix $K'$. Instead, we evaluate $K'\bpsi$ and $(K')^\top\bphi$ through decoupled 1D steps, so that the Kronecker structure of $K$ can still be exploited after incorporating the absorption vectors. 
This decoupled procedure extends the one-dimensional stabilization in Remark~\ref{rem:log-domain-stabilization} to higher dimensions without increasing complexity, thereby preserving numerical stability and computational efficiency.

We illustrate the idea using $K'\bpsi$; the case of $(K')^\top \bphi$ is handled in the same way. 
Specifically, reshape $\bb$ into $B \in \mbr^{N \times M}$ in column-major order. Then, 
\begin{equation}
    \label{eq:K'bpsi}
    \begin{aligned}
            K'\bpsi=\operatorname{diag}(e^{\ba/\varepsilon})&K\operatorname{diag}(e^{\bb/\varepsilon})\bpsi \\[0.6em]
            = e^{\ba/\veps}& \odot \left(K \left(e^{\bb/\veps} \odot \bpsi\right)\right) = e^{\ba/\veps}\odot \left(\left(K_y\otimes K_x\right) \left(e^{\bb/\veps} \odot \bpsi\right)\right)\\
            & \qquad\qquad\qquad\qquad\qquad\qquad= e^{\ba/\veps}\odot\mathrm{vec}\left(\left(K_x\left(e^{B/\veps}\odot\Psi\right)\right)K_y^\top\right),
    \end{aligned}
\end{equation}
where $\bpsi=\operatorname{vec}(\Psi)$ and $\odot$ denotes element-wise multiplication. 

Let $Q=K_x\left(e^{B/\veps}\odot\Psi\right) \in \mbr^{N \times M}$. For each $j=1,\dots,M$, the $j$-th column
\begin{equation*}
    Q(:,j) = K_x \left(e^{B(:,j)/\veps} \odot \Psi(:,j)\right)=K_x\operatorname{diag}\left(e^{B(:,j)/\veps} \right)\Psi(:,j)\;,
\end{equation*}
can be computed using the 1D stabilized fast matrix-vector multiplication $K_x\Psi(:,j)$, with absorption vectors $\ba_x = \mathbf{0}_N$, $\bb_x = B(:,j)$ in Remark~\ref{rem:log-domain-stabilization}. Together, these $M$ columns form the full matrix $Q$. Each computation takes $O(N)$, resulting in a total complexity of $O(NM)$. 

Reshaping $\ba$ into $A \in \mbr^{N \times M}$ in column-major order, \eqref{eq:K'bpsi} can be rewritten as
\begin{equation}
K'\bpsi=e^{\ba/\veps}\odot\mathrm{vec}\left(QK_y^\top\right)=\mathrm{vec}\left(e^{A/\veps}\odot QK_y^\top\right).
\end{equation}
Let $P=e^{A/\veps}\odot QK_y^\top \in \mbr^{N \times M}$. Similar to the computation of $Q$, the matrix $P$ can be computed row by row. For each $k=1,\dots,N$, the $k$-th row satisfies
\begin{equation*}
    P(k,:) = e^{A(k,:)/\varepsilon} \odot\left( Q(k,:)K_y^\top \right)=\left(\operatorname{diag}\left(e^{A(k,:)^\top/\varepsilon}\right)K_yQ(k,:)^\top\right)^\top, 
\end{equation*}
Therefore, $P(k,:)$ can be obtained by applying the 1D stabilized fast algorithm to $K_yQ(k,:)^\top$ with left absorption $\ba_y=A(k,:)^\top$ and right absorption $\bb_y=\mathbf{0}_M$. Each of the $N$ row computations costs $O(M)$, leading to a total complexity of $O(NM)$.

Finally, $K'\bpsi=\operatorname{vec}(P)$, so the overall complexity remains $O(NM)$. The computation of $(K')^\top\bphi$ is handled in the same way. Therefore, by replacing the four matrix products in Algorithm~\ref{alg:2D-FINOM} with their stabilized counterparts, we obtain the stabilized 2D FINOM algorithm without changing the per-iteration complexity.
\end{sloppypar}
\end{document}